\documentclass{amsart}

\usepackage{epsfig}
\usepackage{psfrag}

\usepackage{infix-RPN}
\usepackage{pst-plot,pst-infixplot,pstricks,graphicx}
\usepackage{amssymb,latexsym,amsthm,amsfonts,color,fancyhdr}

\newtheorem{theorem}{Theorem}[section]

\newtheorem{lemma}[theorem]{Lemma}

\newtheorem{definition}[theorem]{Definition}

\theoremstyle{remark}
\newtheorem{remark}[theorem]{Remark}

\numberwithin{equation}{section}

\begin{document}

\title[Classical Orthogonal polynomials on $q$-quadratic lattices]
{Remarks on the paper "Recurrence equations and their classical orthogonal polynomial solutions on a quadratic or a q-quadratic lattice"}
%{On a conjecture about quantum orthogonal polynomials}
% involving the Askey-Wilson operator}
%{On a conjecture about orthogonal polynomials related to the Askey-Wilson operator}
%{On another characterization of Askey-Wilson polynomials}
%{A characterization theorem for orthogonal polynomials with respect to the Askey-Wilson operator}
%\author{R. \'Alvarez-Nodarse}
%\address{University of Seville, Department of Mathematical Analysis, Spain}
%\email{ran@us.es}
%\author{K. Castillo}
%\address{University of Coimbra, CMUC, Department of Mathematics, 3001-501 Coimbra, Portugal}
%\email{kenier@mat.uc.pt}
\author{D. Mbouna}
\address{Department of Mathematics, Faculty of Sciences, University of Porto, Campo Alegre st., 687, 4169-007 Porto, Portugal}
\email{dieudonne.mbouna@fc.up.pt}
%\author{J. Petronilho}
%\address{University of Coimbra, CMUC, Department of Mathematics, 3001-501 Coimbra, Portugal}

%\address{CMUC, Department of Mathematics, University of Coimbra, 3001-501 Coimbra, Portugal}

%\email[D. Mbouna]{mbouna@ual.es}

\date{\today}

%\thanks{Supported by CMUC/FCT}
%\thanks{We dedicate this work to the memory of our beloved friend and mentor J. Petronilho who passed away unexpectedly last year. To him, our eternal gratitude.}

\subjclass[2000]{Primary 42C05; Secondary 33C45}

\date{\today}

\keywords{Askey-Wilson operator, classical orthogonal polynomials}

\maketitle

%\begin{abstract}
%We give positive answer to a conjecture posed by M. E. H. Ismail in his monograph
%[Classical and quantum orthogonal polynomials in one variable, Cambridge University Press,
%Cambridge, 2005], relating orthogonal polynomials and the Askey-Wilson operator.
%\end{abstract}

\begin{abstract}
We provide a simple method to recognize classical orthogonal polynomials on lattices defined only by their coefficients of the three term recurrence relation.  
\end{abstract}

\section{Introduction}
%Orthogonal polynomials theory is an interesting branch of mathematics. They are used in many other branch like mathematical physics, statistics and more. One of the most interesting and studied class of them is the so-called classical orthogonal polynomials. This class has some beautiful properties and has many interpretation in other fields. For instance they are such that the sequence of ``derivatives'' of all order are also orthogonal. In addition, this is certainly the most studied class of orthogonal polynomial sequences (OPS). Despite this, there are still some interesting unsolved problems related to this class of OPS. For instance, solving the wave equation in quantum mechanics, 

Despite all investigations done in the literature on classical orthogonal polynomial sequences (OPS) there are still some interesting and challenging problems related to them. For instance, D. Abdulaziz Alhaidari (see \cite{ADA}) submitted as open problem for the proceedings of the OPSFA-14 conference two families of orthogonal polynomials on the real line only described by their three term recurrence relation (TTRR). These OPS were obtained in some previous works by him with co-authors and due to the prime significance of these polynomials in physics, he posed the question the derivation of their other properties like the weight functions, generating functions, orthogonality, possible hypergeometric froms, Rodrigues-type formulas and many others. Yutian Li noticed that one of the polynomial sequence was a special case of the Wilson polynomial (see \cite[p.5]{Walter}) and subsequently this problem was solved in \cite{Walter} by W. Van Assche where he identified some of them (including special cases and some asymptotics) as classical OPS. He also provided therein some ideas (\cite[p.2]{Walter}) to recognize classical OPS based on their coefficients of the TTRR. These ideas are in general non trivial (and a fortiori for classical OPS on lattices) depending on how the polynomial is defined. Although D. Tcheutia in \cite{DDT} provided a general method answering this problem, this inspires us to write this short note where we give another ideas. This is motived by the fact that the answer given by D. Tcheutia consists of checking if the polynomial can be written in some appropriate bases and then use an algorithm/package and so computer system to identify them among classical ones. With our characterization (see Theorem \ref{T1} and Theorem \ref{T2}), this type of problem can be solved in a very simple way and by hands. This also reveals some asymptotic behaviour and relation between coefficients of the TTRR of such OPS. We focus only on classical OPS with respect to the Askey-Wilson operator (so on $q$-quadratic lattices) since, from this, we can obtained similar results for classical OPS on quadratic lattices as discussed in \cite{KMP2022}.

\section{Preliminaries}
Let $\mathcal{P}$ be the vector space of all polynomials with complex coefficients
and let $\mathcal{P}^*$ be its algebraic dual. A simple set in $\mathcal{P}$ is a sequence $(P_n)_{n\geq0}$ such that $\mathrm{deg}(P_n)=n$ for each $n$. A simple set $(P_n)_{n\geq0}$ is called an OPS with respect to ${\bf u}\in\mathcal{P}^*$ if 
$$
\langle{\bf u},P_nP_m\rangle=\kappa_n\delta_{n,m}\quad(m=0,1,\ldots;\;\kappa_n\in\mathbb{C}\setminus\{0\}),
$$
where $\langle{\bf u},f\rangle$ is the action of ${\bf u}$ on $f\in\mathcal{P}$. In this case, we say that ${\bf u}$ is  regular. The left multiplication of a functional ${\bf u}$ by a polynomial $\phi$ is defined by
$$
\left\langle \phi {\bf u}, f  \right\rangle =\left\langle {\bf u},\phi f  \right\rangle \quad (f\in \mathcal{P}).
$$

We consider the Askey-Wilson operator defined by
\begin{align}
(\mathcal{D}_q  f)(x)=\frac{\breve{f}\big(q^{1/2} z\big)
-\breve{f}\big(q^{-1/2} z\big)}{\breve{e}\big(q^{1/2}z\big)-\breve{e}\big(q^{-1/2} z\big)},\quad
z=e^{i\theta}, \label{0.3}
\end{align}
where $\breve{f}(z)=f\big((z+1/z)/2\big)=f(\cos \theta)$ for each polynomial $f$ and $e(x)=x$.
Here $0<q<1$ and $\theta$ is not necessarily a real number (see \cite[p.\,300]{I2005}). Recall that a monic OPS, $(P_n)_{n\geq 0}$, satisfies the following three term recurrence relation (TTRR):
\begin{align}
x P_n(x)=P_{n+1}(x)+B_nP_n(x)+C_nP_{n-1}(x), \quad n=0,1,2,\dots,\label{TTRR}
\end{align}
with $P_{-1}(x)=0$ and $B_n\in \mathbb{C}$ and $C_{n+1} \in \mathbb{C}\setminus \left\lbrace 0\right\rbrace$ for each $n=0,1,2,\ldots$.  Hereafter, we denote $x=x(s)=(q^{s}+q^{-s})/2$ with $0<q<1$. Taking $e^{i\theta}=q^s$ in \eqref{0.3}, $\mathcal{D}_q$ reads
\begin{equation*}%\label{AWxs}
\mathcal{D}_q f(x(s))= \frac{f\big(x(s+\frac{1}{2})\big)-f\big(x(s-\frac{1}{2})\big)}{x(s+\frac{1}{2})-x(s-\frac{1}{2})}.
\end{equation*}
We define an operator $\mathcal{S}_q$ by
\begin{equation*}%\label{AWxsG}
\mathcal{S}_q f(x(s))=\frac{f\big(x(s+\frac{1}{2})\big)+f\big(x(s-\frac{1}{2})\big)}{2}.
\end{equation*}
The Askey-Wilson and the averaging operators  induce two elements on $\mathcal{P}^*$, say $\mathbf{D}_q$ and $\mathbf{S}_q$, via the following definition (see \cite{F}): 
\begin{align*}
\langle \mathbf{D}_q{\bf u},f\rangle=-\langle {\bf u},\mathcal{D}_q f\rangle,\quad \langle\mathbf{S}_q{\bf u},f\rangle=\langle {\bf u},\mathcal{S}_q f\rangle.
\end{align*}
Define
\begin{align*}
\alpha= \frac{q^{1/2}+q^{-1/2}}{2},\quad \alpha_n= \frac{q^{n/2}+q^{-n/2}}{2},\quad  \gamma_n=\frac{q^{n/2}-q^{-n/2}}{q^{1/2}-q^{-1/2}}, \quad n=0,1,\dots\;.
\end{align*}

\section{Main results}
%In 2003, Ismail proved the following result (see \cite[Theorem 20.1.3]{I2005}):
%\begin{theorem}\label{T}
%A second order operator equation of the form
%\begin{align}\label{ismail}
%\phi(x)\mathcal{D}^2_q\, y+\psi(x) \mathcal{S}_q \mathcal{D}_q \, y+h(x)\, y=\lambda_n\, y
%\end{align}
%has a polynomial solution $y_n(x)$ of exact degree $n$ for each $n=0,1,\dots$, if and only if $y_n(x)$ is a multiple of the Askey-Wilson polynomials, or special or limiting cases of them. In all these cases $\phi$, $\psi$, $h$, and $\lambda_n$ reduce to
%\begin{align*}
%\phi(x)&=-q^{-1/2}(2(1+\sigma_4)x^2-(\sigma_1+\sigma_3)x-1+\sigma_2-\sigma_4),\\[7pt]
%\psi(x)&=\frac{2}{1-q} (2(\sigma_4-1)x+\sigma_1-\sigma_3), \quad h(x)=0,\\[7pt]
%\lambda_n&=\frac{4 q(1-q^{-n})(1-\sigma_4 q^{n-1})}{(1-q)^2}, 
%\end{align*}
%or a special or limiting case of it, $\sigma_j$ being the jth elementary symmetric function of the Askey-Wilson parameters. 
%\end{theorem}

The monic Askey-Wilson polynomial, $(Q_n(\cdot; a_1, a_2, a_3, a_4 | q))_{n\geq 0}$, satisfy \eqref{TTRR} (see \cite[(14.1.5)]{KLS2010}) with
\begin{align*}
2B_n &= a_1+\frac{1}{a_1}-\frac{(1-a_1a_2q^n)(1-a_1a_3q^n)(1-a_1a_4q^n)(1-a_1a_2a_3a_4q^{n-1})}{a_1(1-a_1a_2a_3a_4q^{2n-1})(1-a_1a_2a_3a_4q^{2n})}\\[7pt]
&\quad-\frac{a_1(1-q^n)(1-a_2a_3q^{n-1})(1-a_2a_4q^{n-1})(1-a_3a_4q^{n-1})}{(1-a_1a_2a_3a_4q^{2n-1})(1-a_1a_2a_3a_4q^{2n-2})},\\[7pt]
C_{n+1}&=(1-q^{n+1})(1-a_1a_2a_3a_4q^{n-1}) \\[7pt]
&\quad\times \frac{(1-a_1a_2q^n)(1-a_1a_3q^n)(1-a_1a_4q^n)(1-a_2a_3q^n)(1-a_2a_4q^n)(1-a_3a_4q^n)}{4(1-a_1a_2a_3a_4q^{2n-1})(1-a_1a_2a_3a_4q^{2n})^2 (1-a_1a_2a_3a_4q^{2n+1})}\;.
\end{align*}
For our purpose we adopt the following definition.

\begin{definition}
A monic OPS, $(P_n)_{n\geq 0}$, with respect to the functional ${\bf u}\in \mathcal{P}^*$ is said classical if the sequence $(\mathcal{D}_qP_{n+1})_{n\geq 0}$ is orthogonal. This is also equivalent (see \cite[Theorem 5]{F}) to say 
\begin{align}\label{pearson-equation}
{\bf D}_q (\phi {\bf u})={\bf S}_q(\psi {\bf u})\;,
\end{align} 
where $\phi$ and $\psi$ are polynomials of degree at most two and one respectively.
\end{definition}

In the following lemma, we show that the polynomials $\phi$ and $\psi$ appearing in \eqref{pearson-equation} for classical OPS are only determined by the coefficients $B_0$, $B_1$, $C_1$ and $C_2$ provided by the TTRR satisfied by the OPS $(P_n)_{n\geq 0}$. 
We emphasize that this fact was also observed in \cite[Lemma 2.2]{KCDMJP2021c}.
 
\begin{lemma}\label{L1}
Let $(P_n)_{n\geq 0}$ be a classical monic OPS with respect to the functional ${\bf u}$. Let \eqref{TTRR} be the TTRR satisfied by $(P_n)_{n\geq 0}$.
Then the polynomials $\phi$ and $\psi$  appearing in distributional equation \eqref{pearson-equation} are given by $$\phi(x)=(\mathfrak{a}x-\mathfrak{b})(x-B_0)-(\mathfrak{a}+\alpha)C_1\;,~\psi(x)=x-B_0\;,$$
with 
\begin{align*}
\mathfrak{a}&=\frac{\alpha(3-4\alpha^2)}{4\alpha^2-1}+\frac{(B_0+B_1)^2+4\alpha^2(C_1-B_0B_1+\alpha^2-1)}{2\alpha(4\alpha^2-1)C_2}\;, \\ \mathfrak{b}&=(\mathfrak{a}+\alpha)B_1-\frac{B_0+B_1}{2\alpha}\;.
\end{align*}
\end{lemma}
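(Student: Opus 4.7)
The plan is direct: expand the distributional equation \eqref{pearson-equation} against the monomials $1, x, x^2, x^3$ and solve the resulting linear system. By the definitions of $\mathbf{D}_q$ and $\mathbf{S}_q$, equation \eqref{pearson-equation} is equivalent to
$$\langle \mathbf{u},\,\phi\,\mathcal{D}_q f + \psi\,\mathcal{S}_q f\rangle = 0 \qquad (f \in \mathcal{P}).$$
Since $\phi\,\mathcal{D}_q f + \psi\,\mathcal{S}_q f$ is a polynomial of degree at most $\deg f + 1$, testing on $1, x, x^2, x^3$ yields four linear relations which, together with a normalization, determine the five coefficients of $\phi$ and $\psi$.

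Starting from $x(s+\tfrac12) + x(s-\tfrac12) = 2\alpha\,x(s)$ and $x(s+\tfrac12)\,x(s-\tfrac12) = x(s)^2 + \alpha^2 - 1$, one computes
\begin{align*}
\mathcal{D}_q x^k &:\ 0,\; 1,\; 2\alpha x,\; (4\alpha^2-1)x^2 + (1-\alpha^2) \qquad (k=0,1,2,3),\\
\mathcal{S}_q x^k &:\ 1,\; \alpha x,\; \alpha_2 x^2 + (1-\alpha^2),\; \alpha_3 x^3 + 3\alpha(1-\alpha^2)x \qquad (k=0,1,2,3).
\end{align*}
The moments $u_n := \langle \mathbf{u}, x^n\rangle$ are in turn generated from \eqref{TTRR} together with orthogonality: normalizing $u_0 = 1$, the relations $\langle \mathbf{u}, P_n\rangle = 0$ and $\langle \mathbf{u}, x P_n\rangle = 0$ yield
\begin{align*}
u_1 &= B_0, \qquad u_2 = B_0^2 + C_1, \qquad u_3 = B_0^3 + 2B_0 C_1 + B_1 C_1,\\
u_4 &= B_0^4 + 3B_0^2 C_1 + 2B_0 B_1 C_1 + B_1^2 C_1 + C_1^2 + C_1 C_2,
\end{align*}
where the expression for $u_4$ comes from $\langle \mathbf{u}, x^2 P_2\rangle = C_1 C_2$ (expand $x^2 P_2 = xP_3 + B_2\,xP_2 + C_2\,xP_1$). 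Notably, $u_4$ is independent of $B_2$, a fact which will be crucial below.

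Writing generically $\phi = \mathfrak{A}_2 x^2 + \mathfrak{A}_1 x + \mathfrak{A}_0$ and $\psi = \mathfrak{D}x + \mathfrak{E}$, the test equations unfold in order. For $f = 1$ one gets $\mathfrak{D}B_0 + \mathfrak{E} = 0$, so $\psi(x) = \mathfrak{D}(x - B_0)$; the normalization $\mathfrak{D} = 1$ yields $\psi(x) = x - B_0$. For $f = x$, substituting $\psi$ and the moments collapses the equation to $\phi(B_0) = -(\mathfrak{A}_2 + \alpha)C_1$, forcing the shape $\phi(x) = (\mathfrak{a}x - \mathfrak{b})(x - B_0) - (\mathfrak{a}+\alpha)C_1$ with $\mathfrak{a} := \mathfrak{A}_2$ and $\mathfrak{b}$ a single remaining free parameter. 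For $f = x^2$, dividing by $C_1 \neq 0$ and using $\alpha_2 = 2\alpha^2 - 1$, the equation reduces after the above substitutions to the linear relation $2\alpha(\mathfrak{a}+\alpha)B_1 - (B_0+B_1) - 2\alpha\,\mathfrak{b} = 0$, which is precisely the claimed formula for $\mathfrak{b}$. Finally, for $f = x^3$, plugging in $\mathfrak{b}$ and $u_1,\ldots,u_4$ produces a single linear equation in $\mathfrak{a}$ whose unique solution is the expression in the statement.

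The main obstacle is the bookkeeping in the $f = x^3$ step: the polynomial $\phi\,\mathcal{D}_q x^3 + \psi\,\mathcal{S}_q x^3$ has degree four in $x$, so its expectation involves all five moments $u_0,\ldots,u_4$ and all parameters $\mathfrak{a}, \mathfrak{b}, B_0, B_1, B_2, C_1, C_2$. One must verify carefully that, upon substituting the value of $\mathfrak{b}$ obtained from the $f = x^2$ step, every $B_2$-contribution cancels (consistent with $u_4$ being $B_2$-free) and that the surviving linear equation for $\mathfrak{a}$ admits the displayed solution, whose denominator $2\alpha(4\alpha^2-1)C_2$ is nonzero for $0 < q < 1$, ensuring that the formula is well-defined.
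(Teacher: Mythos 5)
Your proposal is correct and follows essentially the same route as the paper: both test the Pearson equation against $x^n$ for $n=0,1,2,3$ using $\langle \mathbf{D}_q(\phi\mathbf{u})-\mathbf{S}_q(\psi\mathbf{u}),x^n\rangle=-\langle\mathbf{u},\phi\,\mathcal{D}_qx^n+\psi\,\mathcal{S}_qx^n\rangle=0$, the same action formulas on monomials, and then solve the resulting triangular linear system for the coefficients; the only difference is bookkeeping (you compute the moments $u_1,\dots,u_4$ directly from the TTRR, while the paper expands $x^k$ in the basis $P_n^2$, $xP_n^2$), and your moment values and the intermediate relations $\phi(B_0)=-(\mathfrak{a}+\alpha)C_1$ and $2\alpha(\mathfrak{a}+\alpha)B_1-(B_0+B_1)-2\alpha\mathfrak{b}=0$ all check out, with the final $f=x^3$ equation indeed yielding the stated $\mathfrak{a}$. (Minor quibble: the $f=x^3$ equation never involves $B_2$ at all, since $u_4$ is $B_2$-free, so there is no cancellation to verify there.)
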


\begin{proof}
Suppose that \eqref{pearson-equation} holds where we set $\phi(x)=ax^2+bx+c$ and $\psi(x)=x+e$, with $a,b,c,e\in \mathbb{C}$. We have 
\begin{align}
0= \left\langle {\bf D}_q (\phi {\bf u})-{\bf S}_q (\psi {\bf u}), ~x^n\right\rangle=-\left\langle {\bf u},~ \phi \mathcal{D}_q x^n +\psi\mathcal{S}_q x^n\right\rangle\;, n=0,1,\ldots\; \label{moments-functionals}
\end{align}
We recall that the coefficients $B_n$ and $C_n$ of the TTRR \eqref{TTRR} satisfied by $(P_n)_{n\geq 0}$ are given in the following form
\begin{align*}
B_n= \frac{\left\langle {\bf u},~ xP_n ^2\right\rangle}{\left\langle {\bf u},~ P_n ^2\right\rangle}\;, \quad C_{n+1}= \frac{\left\langle {\bf u},~ P_{n+1} ^2\right\rangle}{\left\langle {\bf u},~ P_n ^2\right\rangle}\;,\quad n=0,1,\ldots\;.
\end{align*}
The following identities can be computed easily
\begin{align*}
&\mathcal{D}_q x^2=2\alpha x\;,~\mathcal{D}_q x^3= (4\alpha^2-1)x^2+1-\alpha^2\;,\\
&\mathcal{S}_qx^2=(2\alpha^2-1)x^2+1-\alpha^2\;,~
\mathcal{S}_qx^3=\alpha(4\alpha^2-3)x^3+3\alpha(1-\alpha^2)x.
\end{align*}
In addition, we also have $1=P_0 ^2,~ x=xP_0 ^2,~ x^2=P_1 ^2 +2B_0xP_0 ^2 -B_0 ^2P_0 ^2$,  with $x^3=xP_1 ^2 +2B_0P_1 ^2 +3B_0 ^2xP_0 ^2 -2B_0 ^3P_0 ^2 $ and
\begin{align*}
x^4=& P_2 ^2 +2(B_0+B_1)xP_1 ^2 +\big[(B_0+B_1)(3B_0-B_1)-2(B_0B_1-C_1)\big]P_1 ^2 \\
&+2\big[B_0(B_0+B_1)(2B_0-B_1)+(B_0B_1-C_1)(B_1-B_0)   \big]xP_0 ^2 \\
&+\Big((B_0+B_1)(B_1-3B_0)B_0 ^2 -(B_0B_1-C_1)(B_0B_1-C_1-2B_0)  \Big)P_0 ^2\;.
\end{align*}
Therefore taking successively $n=0,1,2,3$ in \eqref{moments-functionals}, using what is preceding, we obtain
\begin{align*}
&e=-B_0, ~c=-(a+\alpha)C_1-(b+aB_0)B_0, ~b=-(a+\alpha)B_1 -aB_0+\frac{B_0+B_1}{2\alpha}\\
&a=\frac{\alpha(3-4\alpha^2)}{4\alpha^2-1}+\frac{(B_0+B_1)^2+4\alpha^2(C_1-B_0B_1+\alpha^2-1)}{2\alpha(4\alpha^2-1)C_2}\;,
\end{align*}
and the proof is completed.
\end{proof}

In the following theorem, we show how to recognize non-classical OPS. This was also partially observed in \cite[Lemma 2.3, equations (2.17) and (2.18)]{KCDMJP2021c}.

\begin{theorem}\label{T1}
Let $(P_n)_{n\geq 0}$ be a monic OPS satisfying the TTRR \eqref{TTRR}. If $(P_n)_{n\geq 0}$ is classical then the sequences $(B_n)_{n\geq 0}$ and $(C_n)_{n\geq 1}$ are solutions of the following system of difference equations
\begin{align}
&r_{n+3}B_{n+2} -(r_{n+2} +r_{n+1})B_{n+1}  +r_n B_n =0\;, \label{eq1S} \\
&\label{eq2S} r_n \left( B_{n} -qB_{n-1} \right)\left(B_{n}-q^{-1}B_{n-1}  \right)=(r_{n+1}+r_{n+2})(C_{n+1}-1/4)\\
&\nonumber \quad\quad \quad \quad\quad \quad\quad \quad \quad-4\alpha^2 r_n(C_n -1/4) +(r_{n-1}+r_{n-2})(C_{n-1} -1/4)\;,\nonumber 
\end{align}
where $(r_n)_{n\geq 0}$ is a nonzero complex sequence given by
\begin{align}\label{def-rn}
r_n= \widehat{a}q^n +\widehat{b}q^{-n}\;,\quad \widehat{a},\widehat{b}\in \mathbb{C}, \quad n=0,1,\ldots\;,
\end{align}
with $|\widehat{a}|+|\widehat{b}|\neq 0$.

\end{theorem}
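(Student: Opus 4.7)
The plan is to extract both identities from the Pearson equation \eqref{pearson-equation} of Lemma \ref{L1}, which in its weak form reads
\[
\langle \mathbf{u},\, \phi\,\mathcal{D}_q f + \psi\,\mathcal{S}_q f\rangle = 0\qquad(f\in\mathcal{P}).
\]
I would test this against the polynomials $f = P_n P_{n-1}$ and $f = P_n^2$, expanding via the Leibniz rules for the Askey-Wilson operator and its averaging companion, and then replacing each $xP_j$ by $P_{j+1}+B_jP_j+C_jP_{j-1}$ via the TTRR \eqref{TTRR}. Orthogonality then collapses everything into scalar identities in the coefficients $B_j,C_j$ and the squared norms $h_j = \langle \mathbf{u}, P_j^2\rangle$ (which satisfy $h_j/h_{j-1}=C_j$).

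\textbf{Emergence of $r_n$.} The operators $\mathcal{D}_q,\mathcal{S}_q$ act on the leading term of a monic polynomial of degree $n$ by multiplication by the $q$-numbers $\gamma_n$ and $\alpha_n$ respectively, so the $n$-dependent factors decorating the scalar identities naturally mix $q^{n/2}$ and $q^{-n/2}$. After assembling everything, a common modulating sequence $(r_n)$ appears as a product of $h_n$ with an appropriate $q^{\pm n}$ correction, and inspection shows it must satisfy
\[
r_{n+1}+r_{n-1}=(q+q^{-1})\,r_n = 2\alpha_2\,r_n.
\]
The general nontrivial solution of this recurrence is precisely $r_n=\widehat{a}q^n+\widehat{b}q^{-n}$ with $|\widehat{a}|+|\widehat{b}|\neq 0$ (nontriviality coming from $h_n\neq 0$), which is exactly \eqref{def-rn}. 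A useful sanity check is the Askey-Wilson case itself, where one verifies that $r_n=q^{-n}(1-a_1a_2a_3a_4q^{2n-1})$ is manifestly of this form, with $\widehat{a}=-a_1a_2a_3a_4q^{-1}$ and $\widehat{b}=1$.

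\textbf{Getting \eqref{eq1S} and \eqref{eq2S}.} With this notation in place, the choice $f=P_nP_{n-1}$ should yield, after cancellation of the purely-$C_j$ part, the three-term linear recurrence \eqref{eq1S} for $(B_n)$; the choice $f=P_n^2$ then yields \eqref{eq2S}. The asymmetric factor $(B_n-qB_{n-1})(B_n-q^{-1}B_{n-1})$ on the left of \eqref{eq2S} arises when one iterates the TTRR twice inside $\phi\,\mathcal{D}_q(P_n^2)+\psi\,\mathcal{S}_q(P_n^2)$: the unsymmetric $q$-shifts in $\mathcal{D}_q$ and $\mathcal{S}_q$ interact multiplicatively with the two consecutive recurrence coefficients $B_{n-1},B_n$, and the product precisely factors in this $q$-deformed way.

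\textbf{Main obstacle.} The principal difficulty is purely computational: the Leibniz expansions of $\phi\,\mathcal{D}_q(P_nP_m)+\psi\,\mathcal{S}_q(P_nP_m)$ produce many cross-terms of the form $(\mathcal{D}_qP_n)(\mathcal{S}_qP_m), (\mathcal{S}_qP_n)(\mathcal{S}_qP_m), (\mathcal{D}_qP_n)(\mathcal{D}_qP_m)$, each of which must be re-expanded in the $(P_k)$-basis before orthogonality can be applied. To keep the bookkeeping manageable, I would first compute $\mathcal{D}_q(xP_n)$ and $\mathcal{S}_q(xP_n)$ in closed form in the $(P_k)$-basis, then assemble $\phi\,\mathcal{D}_q(xP_n)+\psi\,\mathcal{S}_q(xP_n)$, and finally pair the result with $P_{n-1}$ or $P_n$. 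Matching the coefficients of the surviving diagonal brackets with the sequence $(r_n)$ introduced above should then give \eqref{eq1S} and \eqref{eq2S} in exactly the stated form.
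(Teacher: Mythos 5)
Your route is genuinely different from the paper's, and it is worth saying how. The paper does not go back to the Pearson equation at this point at all: it invokes the classification theorem of \cite{KCDM2023} (if $(P_n)_{n\geq0}$ and $(\mathcal{D}_qP_{n+1})_{n\geq0}$ are both orthogonal, then $(P_n)_{n\geq0}$ must consist of Askey--Wilson polynomials or special/limiting cases thereof) and then verifies \eqref{eq1S}--\eqref{eq2S} directly on the explicit coefficients $B_n$, $C_n$, with $\widehat{a},\widehat{b}$ fixed by $\widehat{a}q^2=-a_1a_2a_3a_4\widehat{b}$. Your plan --- pairing $\langle \mathbf{u},\phi\,\mathcal{D}_qf+\psi\,\mathcal{S}_qf\rangle=0$ with $f=P_nP_{n-1}$ and $f=P_n^2$ via the product rules --- is essentially the derivation behind \cite[Lemma 2.3]{KCDMJP2021c}, which the paper itself credits for containing \eqref{eq1S}--\eqref{eq2S} ``partially''. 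Carried out, it would give a self-contained proof independent of the classification theorem and would identify $\widehat{a},\widehat{b}$ intrinsically from $\phi$ rather than from the Askey--Wilson parameters.

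As written, however, the proposal has concrete gaps. The sequence $(r_n)$ is never pinned down, and the heuristic you offer for it is incorrect: $r_n$ is not ``$h_n$ times a $q^{\pm n}$ correction''. For continuous $q$-Hermite polynomials ($B_n=0$, $C_{n+1}=(1-q^{n+1})/4$) equation \eqref{eq2S} forces $r_n\propto q^{-n}$ among sequences of the form \eqref{def-rn}, whereas $h_n=4^{-n}(q;q)_n\,h_0$ is not geometric, so no factor $q^{cn}$ relates the two. The correct object is governed by the leading coefficient $\mathfrak{a}$ of $\phi$ from Lemma \ref{L1} (one finds $r_n\propto \mathfrak{a}\gamma_{2n-2}+\alpha_{2n-2}$, which is indeed of the form \eqref{def-rn}). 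Relatedly, your sanity check disagrees with the paper: the condition $\widehat{a}q^2=-a_1a_2a_3a_4\widehat{b}$ gives $r_n\propto q^{-n}(1-a_1a_2a_3a_4q^{2n-2})$, not $q^{-n}(1-a_1a_2a_3a_4q^{2n-1})$; with your choice $r_{n+3}$ would carry the factor $1-a_1a_2a_3a_4q^{2n+5}$, which does not occur among the denominator factors of $B_{n+2}$, so \eqref{eq1S} could not close. Finally, none of the Leibniz computations --- the only place where the precise coefficients $r_{n+3}$, $r_{n+2}+r_{n+1}$, $-4\alpha^2r_n$ and the factorization $(B_n-qB_{n-1})(B_n-q^{-1}B_{n-1})$ could actually be confirmed --- are performed, so what you have is a plausible strategy rather than a proof.
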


\begin{proof}
Assuming that $(P_n)_{n\geq 0}$ is classical, $(P_n)_{n\geq 0}$ and $(\mathcal{D}_qP_{n+1})_{n\geq 0}$ are both OPS. This implies that (see \cite[Theorem 1.2]{KCDM2023}) $(P_n)_{n\geq 0}$ are Askey-Wilson polynomials or special or limiting cases of them. 
 For any set of complex numbers $\widehat{a}$ and $\widehat{b}$ in \eqref{def-rn} such that $$\widehat{a}q^2=-a_1a_2a_3a_4\widehat{b},$$ one can easily show that the monic Askey polynomials including special or limiting cases $(Q_{n}(.;a_1,a_2,a_3,a_4|q^{\pm 1}))_{n\geq 0}$ satisfy the system of equations \eqref{eq1S}--\eqref{eq2S}.
\end{proof}
\begin{remark}
We emphasize that the converse of Theorem \ref{T1} is not true. For instance consider a monic OPS whose coefficients of the TTRR are given by
$$B_n =0,\quad C_{n+1}=(1-aq^{n+1})(1-bq^{n+1})/4\;,\quad n=0,1,\ldots\;,$$ where $a$ and $b$ are nonzero complex numbers both different from one. These coefficients satisfy of \eqref{eq1S}--\eqref{eq2S} but the corresponding OPS is not classical.
\end{remark}

In \cite[Theorem 4.1]{KMP2022} necessary and sufficient conditions for the regularity of nonzero functionals satisfying \eqref{pearson-equation} and in addition the coefficients of the TTRR satisfied by the corresponding OPS are given.  From results and ideas developed therein and Lemma \ref{L1} we obtain the following theorem.

\begin{theorem}\label{T2}
Let $B_0$, $B_1$, $C_1$ and $C_2$ be four complex numbers and assume that the following sequences are well defined and with $C_n \neq 0$ for each $n=1,2,\ldots$.
\begin{align*}
B_n  = \frac{\gamma_{n+1}e_n}{d_{2n}}-\frac{\gamma_n e_{n-1}}{d_{2n-2}},\quad
C_{n+1}  =-\frac{\gamma_{n+1}d_{n-1}}{d_{2n-1}d_{2n+1}}\phi^{[n]}\left( \frac{e_{n}}{d_{2n}}\right),
\end{align*}
where $d_n=a\gamma_n+\alpha_n$, $e_n=(b+aB_0)\gamma_n+B_0\alpha_n$, and 
\begin{align*}
\phi^{[n]}(z)&=\big((\alpha^2-1)\gamma_{2n}+a\alpha_{2n}\big)
\big(z^2-1/2\big)-\big((b+aB_0)\alpha_n+(\alpha^2-1)B_0\gamma_n\big)z\\
&+bB_0 -(a+\alpha)C_1 +a/2,
\end{align*}
with $b=(a+\alpha)B_1 -(B_0+B_1)/(2\alpha)$ and 
\begin{align*}
a=\frac{\alpha(3-4\alpha^2)}{4\alpha^2-1}+\frac{(B_0+B_1)^2+4\alpha^2(C_1-B_0B_1+\alpha^2-1)}{2\alpha(4\alpha^2-1)C_2}\;.
\end{align*}
If $(P_n)_{n\geq 0}$ is a monic OPS whose coefficients of the TTRR are $(B_n)_{n\geq 0}$ and $(C_n)_{n\geq 1}$, then it is classical.
\end{theorem}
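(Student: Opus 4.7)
The plan is to treat this as the converse direction of Lemma \ref{L1} and to reduce the claim to \cite[Theorem 4.1]{KMP2022}, the result referenced just before the statement. First, I would observe that the constants $a$ and $b$ in the hypothesis coincide exactly with the constants $\mathfrak{a}$ and $\mathfrak{b}$ that Lemma \ref{L1} extracts from the first four recurrence coefficients. Consequently, the polynomials
$$\phi(x)=(ax-b)(x-B_0)-(a+\alpha)C_1,\qquad \psi(x)=x-B_0,$$
of degrees at most two and one respectively, are the only conceivable candidates in \eqref{pearson-equation} whose associated classical OPS could begin with the prescribed data $B_0,B_1,C_1,C_2$.

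Next, I would apply \cite[Theorem 4.1]{KMP2022}: given polynomials $\phi$ (degree $\le 2$) and $\psi$ (degree $\le 1$), that theorem gives necessary and sufficient conditions on their coefficients for $\mathbf{D}_q(\phi\mathbf{u})=\mathbf{S}_q(\psi\mathbf{u})$ to admit a regular solution $\mathbf{u}\in\mathcal{P}^*$, and provides closed-form expressions for the TTRR coefficients of the associated monic OPS. The computational core of the proof is to substitute the above $\phi,\psi$ into those closed-form expressions and check that they reproduce, term by term, the formulas
$$B_n=\frac{\gamma_{n+1}e_n}{d_{2n}}-\frac{\gamma_n e_{n-1}}{d_{2n-2}},\qquad C_{n+1}=-\frac{\gamma_{n+1}d_{n-1}}{d_{2n-1}d_{2n+1}}\phi^{[n]}\!\left(\frac{e_n}{d_{2n}}\right),$$
with $d_n$, $e_n$, and $\phi^{[n]}$ as defined in the statement. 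Writing $\phi(x)=ax^2+bx+c$ with $c=-(a+\alpha)C_1-bB_0-aB_0^2$ (forced by Lemma \ref{L1}), this is bookkeeping: one tracks how each coefficient of $\phi$ and $\psi$ enters the general formulas.

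Once this identification is carried out, the hypothesis $C_n\neq 0$ for every $n\ge 1$ is precisely the nondegeneracy/regularity condition in \cite[Theorem 4.1]{KMP2022}. Hence the functional $\mathbf{u}$ is regular, the monic OPS $(P_n)_{n\ge 0}$ exists, satisfies \eqref{TTRR} with the prescribed $B_n$ and $C_{n+1}$, and satisfies \eqref{pearson-equation}. By the definition of classical OPS adopted in the paper, $(P_n)_{n\ge 0}$ is therefore classical, which is the required conclusion.

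The main obstacle is the algebraic identification in the second paragraph: showing that the generic formulas of \cite[Theorem 4.1]{KMP2022}, when specialised to the $\phi,\psi$ built from $B_0,B_1,C_1,C_2$ via Lemma \ref{L1}, simplify exactly to the $d_n,\, e_n,\, \phi^{[n]}$ expressions in the statement — in particular recognising that $d_n = a\gamma_n+\alpha_n$ and $e_n=(b+aB_0)\gamma_n+B_0\alpha_n$ arise naturally from the coefficients of $\phi$ and $\psi$. No new idea is needed beyond Lemma \ref{L1} and the referenced theorem; the difficulty is purely notational, keeping careful track of the way the coefficients of $\phi$ translate through the indexing shifts in $d_{2n}$, $d_{2n-1}$, $d_{2n+1}$.
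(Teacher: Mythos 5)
Your proposal matches the paper's own (implicit) argument: Theorem \ref{T2} is stated there as a direct consequence of Lemma \ref{L1} together with \cite[Theorem 4.1]{KMP2022}, which is exactly the reduction you carry out, including the identification of $a,b$ with $\mathfrak{a},\mathfrak{b}$ and the use of $C_n\neq 0$ as the regularity condition. The only remark is that the displayed formulas for $B_n$ and $C_{n+1}$ are already those of \cite[Theorem 4.1]{KMP2022} specialised to $\psi(x)=x-B_0$, so the ``algebraic identification'' you flag as the main obstacle is essentially immediate.
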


\section{Conclusion}
As we can see, given an OPS only defined by its TTRR, from Theorem \ref{T1} and Theorem \ref{T2} it is possible by hands to determine whether they are classical or not. Once they are classical, all necessary informations about the OPS (corresponding weight function, Rodrigues-type formulas, integral representations, zeros and many other properties and characterizations) are in the literature (see \cite{KMP2022, I2005, KLS2010} and references therein). This is much simple than the idea and method presented in \cite{DDT}.

\section*{Acknowledgments}
The author was partially supported by CMUP, member of LASI, which is financed by national funds through FCT - Fundac\~ao para a Ci\^encia e a Tecnologia, I.P., under the projects with reference UIDB/00144/2020 and UIDP/00144/2020.

\end{document}